\newtheorem{theorem}{Theorem}
\newtheorem{lemma}{Lemma}
\newtheorem{definition}{Definition}
\newtheorem{remark}{Remark}
\def\e{\epsilon}
\def\grad{\nabla}
\def\diamX{\text{diam}(\mathcal{X})}
\def\Fcal{\mathcal{F}}
\def\Xcal{\mathcal{X}}
\def\Ebb{\mathbb{E}}
\def\Pbb{\mathbb{P}}
\def\ghat{\hat{g}}
\def\rhat{\hat{r}}
\title{On the modes of convergence of Stochastic Optimistic Mirror Descent (OMD) for saddle point problems}
\author[1]{Yanting Ma}
\author[2]{Shuchin Aeron}
\author[1]{Hassan Mansour}
\affil[1]{Mitsubishi Electric Research Laboratories (MERL)}
\affil[2]{Department of Electrical and Computer Engineering, Tufts University}
\date{}
\begin{document}
\maketitle

\begin{abstract}
In this article, we study the convergence of Mirror Descent (MD) and Optimistic Mirror Descent (OMD) for saddle point problems satisfying the notion of coherence as proposed in \cite{Mertikopoulos2019optimistic}. We prove convergence of OMD with exact gradients for coherent saddle point problems, and show that monotone convergence only occurs after some sufficiently large number of iterations. This is in contrast to the claim in~\cite{Mertikopoulos2019optimistic} of monotone convergence of OMD with exact gradients for coherent saddle point problems. Besides highlighting this important subtlety, we note that the almost sure convergence guarantees of MD and OMD with stochastic gradients for strictly coherent saddle point problems that are claimed in \cite[Theorems 3.1 and 4.3]{Mertikopoulos2019optimistic}, respectively, are not fully justified by their proof. As such, we fill out the missing details in the proof and as a result have only been able to prove convergence with high probability. 

We would like to note that our analysis relies heavily on the core ideas and proof techniques introduced in \cite{Zhou2017stochasticMD, Mertikopoulos2019optimistic} and only aim to re-state and correct the results in light of what we were able to prove rigorously while filling in the much needed missing details in their proofs.


\end{abstract}

\section{Introduction}
We analyze some recent results on the use of Mirror Descent (MD) and Optimistic Mirror descent (OMD), \cite{Mertikopoulos2019optimistic} that have recently been studied extensively for alleviating convergence issues in training of adversarial generative networks \cite{Daskalakis2018training,Mertikopoulos2019optimistic}. In particular these papers consider the following general saddle-point (SP) problem.
\begin{align} \label{eq:OPT}
   \min_{x_1\in\Xcal_1}\max_{x_2\in\Xcal_2} f(x_1,x_2),
\end{align}
where $\Xcal_i$ are compact convex subset of a finite-dimensional normed space $\mathcal{V}_i\in\mathbb{R}^{d_i}$, $i=1,2$, and $f:\Xcal_1\times\Xcal_2\to\mathbb{R}$ is continuously differentiable. Let $\Xcal:=\Xcal_1\times\Xcal_2$ and let $\mathcal{V}^*$ be the dual of $\mathcal{V}:=\mathcal{V}_1\times\mathcal{V}_2$. Define $g:\Xcal\to\mathcal{V}^*$ as
\begin{equation*}
    g(x):= (\grad_{x_1} f(x_1,x_2), - \grad_{x_2} f(x_1,x_2)).
\end{equation*}
It is well-known that if $x^*$ is a solution to \eqref{eq:OPT}, then it satisfies the Stampacchia Variational Inequality (SVI) \cite{Rosa07}, i.e.
\begin{equation*}
    \langle g(x^*), x - x^* \rangle \geq 0, \forall x\in\Xcal.
\end{equation*}
When the function $f(x,y)$ is convex-concave, it is also well-known that the point $x^*\in\Xcal$ also satisfies the Minty Variational Inequality (MVI) \cite{Rosa07}, i.e. 
\begin{equation*}
    \langle g(x), x - x^* \rangle \geq 0, \forall x\in\Xcal.
\end{equation*}
For convex-concave problems, it can be shown that these three conditions, namely, $x^*$ is the optimal point of \eqref{eq:OPT}, $x^*$ satisfies SVI, and $x^*$ satisfies MVI, are all equivalent. The interplay of these two VIs characterizing the optimal set of solutions has been investigated extensively. We focus here on the notion of coherence proposed in \cite{Zhou2017stochasticMD,Mertikopoulos2019optimistic}, that are assumed to be satisfied by problem \eqref{eq:OPT} and that are in some sense the weakest set of possible conditions considered for global optimality going beyond convexity, pseudo-montonocity, and quasi-convexity \cite{Zhou2017stochasticMD}.

In order to describe MD and OMD, one needs to define the notion of Bregman Divergence (BD) with respect to a differentiable and $K$-strongly convex function $h$ whose domain includes the set $\mathcal{X}$. There are several equivalent definitions of $K$-strong convexity, here we provide the one that will directly be used in the proof later:
\begin{equation}
    \langle \grad h(x) - \grad h(x'), x - x' \rangle \geq K\|x - x'\|^2,\quad \forall x,x'\in \Xcal.
\label{eq:def_K}
\end{equation}
We further assume that $\grad h$ is $L_h$-Lipschitz, which is needed in the proof.
The BD is defined as,
\begin{align}
    D(x,y) = h(y) - h(x) - \langle \grad h(x), y - x\rangle.
\end{align}
Bregman divergence enjoys a number of properties that are critical to the success of MD and OMD and we refer the reader to the Apendices in \cite{Mertikopoulos2019optimistic} where they are proposed and derived. Given a vector  $y \in \mathcal{V}^*$, and a vector $x \in \mathcal{V}$, define the following Bregman projection operator $P_x(y)$ via, 
\begin{align}
    P_x(y) = \arg \min_{x' \in \mathcal{X}} \{ \langle y, x - x' \rangle + D(x',x)\} .
    \label{eq:def_Proj}
\end{align}
In the following we will assume that an $h$ is given and fixed throughout.

\subsection{Variational Inequalities and Coherence}
The following definition of coherence is provided in \cite[Definition 2.1]{Mertikopoulos2019optimistic}, where we explicitly define what it means by $x$ being sufficiently close to $x^*$ in Condition 3; this definition will be used in the proofs presented in Section \ref{sec:appendix}.

\begin{definition} 
\label{def:coherence}
We say that (SP) is \textbf{coherent} if
\begin{enumerate}
    \item Every solution of (SVI) also solves (SP).
    \item There exists a solution $p$ of (SP) that satisfies (MVI).
    \item Every solution $x^*$ of (SP) satisfies (MVI) locally. Specifically, for some fixed $\e_0>0$, $\langle g(x), x-x^* \rangle\geq 0$ for all $x\in\Xcal$ such that $D(x^*,x)\leq \e_0$.
\end{enumerate}
If, moreover, (MVI) holds as a strict inequality in Condition 2 when $x$ is not a solution of (SP), then (SP) will be called \textbf{strictly coherent}; by contrast, if (MVI) holds as an equality in Condition 2 for all $x\in\Xcal$, we will say that (SP) is \textbf{null-coherent}.
\end{definition}



\subsection{Mirror Descent Algorithms}
Under stochastic gradients, the MD and OMD are defined below along with the almost standard assumptions on the expected values and the variance of the gradient estimates. For all the probabilistic statements in this article, we consider the probability space $(\Omega,\Fcal,\Pbb)$ and let $\Ebb$ denote the expectation with respect to $\Pbb$.\\

\noindent \textbf{Mirror Descent (MD)}: The vanilla mirror descent (MD) algorithm is defined as
\begin{equation}
X_{n+1} = P_{X_n}(-\gamma_n \ghat_n),
\label{eq:SMD_algo}
\end{equation}
where $G\in [0,\infty)$ is some constant and $\ghat_n$ satisfies 
\begin{equation}
\Ebb[\ghat_n | \Fcal_n]=g(X_n) \quad \text{and} \quad \Ebb[\|\ghat_n\|_*^2 | \Fcal_n]\leq G^2
\label{eq:SMD_grad_cond}
\end{equation}
with $\Fcal_n:=\sigma(X_1,\ldots,X_n)$, the $\sigma$-algebra generated by $X_1,\ldots,X_n$. 
Note that with the assumption in \eqref{eq:SMD_grad_cond}, we have that there exists some constant $\tau$ such that
\begin{equation}
\Ebb[ \| \ghat_n - g(X_n)\|_*^2 | \Fcal_n ] \leq 2\Ebb[ \| \ghat_n\|_*^2 | \Fcal_n ] +  2\| g(X_n)\|_*^2 \leq \tau^2,
\label{eq:def_tau}
\end{equation}
since a continuous function ($g$) is bounded on a compact set ($\Xcal$).\\

\noindent \textbf{Optimistic Mirror Descent (OMD)}: The optimistic mirror descent (OMD) algorithm is defined as
\begin{equation}
Y_n = P_{X_n}(-\gamma_n \ghat_n),\quad X_{n+1} = P_{X_n}(-\gamma_n \rhat_n),
\label{eq:SOMD_algo}
\end{equation}
with the assumption that
\begin{equation}
\begin{split}
&\Ebb[\ghat_n | \Fcal_{n,n-1}] = g(X_n), \quad \Ebb[\rhat_n | \Fcal_{n,n}] = g(Y_n),\\
&\Ebb[\|\ghat_n\|_*^2 | \Fcal_{n,n-1}] \leq G^2, \quad \Ebb[\|\rhat_n\|_*^2 | \Fcal_{n,n}] \leq G^2,
\end{split}
\label{eq:grad_assumption}
\end{equation}
where $\Fcal_{n_1,n_2}=\sigma(X_1,\ldots,X_{n_1},Y_1,\ldots,Y_{n_2})$. Similarly, there exist some finite constant $\sigma$ such that
\begin{equation}
\Ebb[ \| \ghat_n - g(X_n)\|_*^2 | \Fcal_{n,n-1}]  \leq \sigma^2 \quad \text{and} \quad \Ebb[ \| \rhat_n - g(Y_n)\|_*^2 | \Fcal_{n,n}]  \leq \sigma^2.
\label{eq:def_sigma}
\end{equation}

\section{Main Results}

We are now ready to \emph{re-state} the main results from \cite{Mertikopoulos2019optimistic} with several important corrections.

First, for \textbf{coherent problems} and OMD with exact (non-stochastic) gradients, it is claimed in \cite{Mertikopoulos2019optimistic} that $\{D(x^*,X_n)\}_n$ is monotone decreasing for some $x^*\in\Xcal^*$. However, as we fill out the missing details in the proof, we believe that monotone decreasing is guaranteed only after some sufficiently large number of iterations; our modified statement is given in Theorem \ref{thm:OMD}.
\begin{theorem}[{Modified from \cite[Theorem 4.1]{Mertikopoulos2019optimistic}}]
\label{thm:OMD}
Suppose that (SP) is coherent and $g$ is $L_g$-Lipschitz. If the algorithm \eqref{eq:SOMD_algo} is run with exact (non-stochastic) gradient and step-size sequence $\{\gamma_n\}_n$ that satisfies
\begin{equation*}
0 < \lim_{n\to\infty} \gamma_n \leq \sup_{n} \gamma_n < K/L_g,
\end{equation*}
where $K$ is defined in \eqref{eq:def_K}, then $\lim_{n\to\infty} X_n = x^*\in\Xcal^*$. Moreover, there exists some sufficiently large $n_0$ such that $D(x^*,X_n)$ decreases monotonically in $n$ for all $n\geq n_0$.
\end{theorem}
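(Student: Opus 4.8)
The plan is to rest everything on a single energy inequality for the two prox-steps of \eqref{eq:SOMD_algo}, and then extract the two conclusions by invoking, respectively, the \emph{global} Minty property (Condition 2) and the \emph{local} Minty property (Condition 3) of Definition \ref{def:coherence}. First I would derive the fundamental descent inequality. Applying the three-point property of the Bregman prox-mapping \eqref{eq:def_Proj} to each of the updates $Y_n=P_{X_n}(-\gamma_n g(X_n))$ and $X_{n+1}=P_{X_n}(-\gamma_n g(Y_n))$, one obtains for every $p\in\Xcal$
\begin{equation*}
D(p,X_{n+1}) \leq D(p,X_n) - \gamma_n\langle g(Y_n), Y_n - p\rangle + \gamma_n\langle g(Y_n)-g(X_n), X_{n+1}-Y_n\rangle - D(X_{n+1},Y_n) - D(Y_n,X_n).
\end{equation*}
Using $L_g$-Lipschitzness of $g$, Young's inequality on the cross term, and $K$-strong convexity in the form $D(a,b)\geq \tfrac{K}{2}\|a-b\|^2$, the cross term is absorbed into the two negative Bregman terms; because $\sup_n\gamma_n<K/L_g$, the leftover coefficient $\delta:=K-(\sup_n\gamma_n)L_g>0$ is strictly positive, so
\begin{equation*}
D(p,X_{n+1}) \leq D(p,X_n) - \gamma_n\langle g(Y_n), Y_n - p\rangle - \psi_n, \qquad \psi_n \geq \tfrac{\delta}{2}\|X_n-Y_n\|^2\geq 0.
\end{equation*}
Pinning down this threshold is exactly where the hypothesis $\sup_n\gamma_n<K/L_g$ enters.

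Next I would instantiate $p$ as the global Minty solution $p\in\Xcal^*$ guaranteed by Condition 2, so that $\langle g(Y_n),Y_n-p\rangle\geq 0$ for every $n$. The inequality then gives $D(p,X_{n+1})\leq D(p,X_n)$, so $\{D(p,X_n)\}_n$ is nonincreasing and bounded below, hence convergent; $\{X_n\}$ is bounded (via $\|X_n-p\|^2\leq \tfrac{2}{K}D(p,X_n)$); and telescoping yields $\sum_n\psi_n<\infty$ together with $\sum_n\gamma_n\langle g(Y_n),Y_n-p\rangle<\infty$. Since $\gamma_n$ is bounded away from $0$ (as $\lim_n\gamma_n>0$), this forces $\|X_n-Y_n\|\to 0$. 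To identify a limit, I would extract a subsequence $X_{n_k}\to\bar x$, so that $Y_{n_k}\to\bar x$ as well; writing the first-order optimality condition of the prox-step defining $Y_n$, dividing by $\gamma_n$, bounding $\langle\grad h(Y_n)-\grad h(X_n),\cdot\rangle$ through the $L_h$-Lipschitzness of $\grad h$, and letting $k\to\infty$ (using $\|X_{n_k}-Y_{n_k}\|\to 0$, $\gamma_{n_k}$ bounded below, and continuity of $g$) yields $\langle g(\bar x), x-\bar x\rangle\geq 0$ for all $x\in\Xcal$. Thus $\bar x$ solves (SVI), and by Condition 1, $\bar x=:x^*\in\Xcal^*$.

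Finally I would upgrade to full-sequence convergence and, simultaneously, establish the eventual monotonicity; this is the delicate step on which the correction hinges. Since the limit $x^*$ need not be the global Minty point $p$, it is only known to satisfy (MVI) \emph{locally} (Condition 3), so I cannot conclude $\langle g(Y_n),Y_n-x^*\rangle\geq 0$ for all $n$ — only once $D(x^*,Y_n)\leq\e_0$. The energy inequality controls $D(x^*,X_n)$ on the $X$-iterates, not $D(x^*,Y_n)$, so I would bridge the gap: because $\|X_n-Y_n\|\to 0$ and $D(x^*,\cdot)$ is continuous, there is $\eta_n\to 0$ with $D(x^*,Y_n)\leq D(x^*,X_n)+\eta_n$. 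Choosing $n_0$ along the convergent subsequence so large that $D(x^*,X_{n_0})\leq\e_0/2$ and $\eta_n\leq\e_0/2$ for all $n\geq n_0$, an induction traps the iterates: if $D(x^*,X_n)\leq\e_0/2$ then $D(x^*,Y_n)\leq\e_0$, Condition 3 applied at $Y_n$ gives $\langle g(Y_n),Y_n-x^*\rangle\geq 0$, and the energy inequality with $p=x^*$ yields $D(x^*,X_{n+1})\leq D(x^*,X_n)\leq\e_0/2$. Hence $D(x^*,X_n)$ is monotonically nonincreasing for $n\geq n_0$, and since $D(x^*,X_{n_k})\to 0$, the whole sequence $D(x^*,X_n)\to 0$, i.e. $X_n\to x^*$.

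I expect the main obstacle to be exactly this last step: the bookkeeping that converts the energy inequality's control of $D(x^*,X_n)$ into the control of $D(x^*,Y_n)$ needed to invoke the local Minty property, and the trapping argument keeping the iterates inside $\{x:D(x^*,x)\leq\e_0\}$. It is precisely because the limit $x^*$ may differ from the global Minty point $p$ of Condition 2 that monotonicity cannot be asserted from the first iteration and only holds for $n\geq n_0$ — the subtlety this theorem is correcting.
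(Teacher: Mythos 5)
Your proposal is correct and follows essentially the same route as the paper's proof: the descent inequality from the two prox-steps, the global Minty point $p$ to get $\|X_n-Y_n\|\to 0$, a convergent subsequence whose limit is shown to solve (SVI) and hence lies in $\Xcal^*$, and then the bridge from $D(x^*,X_n)$ to $D(x^*,Y_n)$ combined with the local Minty condition and an induction to trap the iterates and obtain eventual monotone decrease. The only (immaterial) variations are that you identify the subsequential limit via the prox-step's first-order optimality condition rather than the fixed-point identity $\hat{x}=P_{\hat{x}}(-\gamma g(\hat{x}))$, and you conclude full convergence from eventual monotonicity plus the subsequence limit rather than re-running the trapping for each accuracy level $\e$.
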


Next, for \textbf{strictly coherent problems}, almost sure convergence is claimed in \cite{Mertikopoulos2019optimistic} for both MD and OMD with stochastic gradients. However, as we fill out the missing details in their proofs, we believe that only convergence with high probability can be guaranteed; our modified theorem statements for MD and OMD are given in Theorem~\ref{thm:SMD} (a) and Theorem~\ref{thm:SOMD}, respectively. Moreover, for \textbf{null-coherent problems}, it is claimed in \cite{Mertikopoulos2019optimistic} that the sequence $\{\Ebb[D(x^*,X_n)]\}_n$ is non-decreasing for all $x^*\in \Xcal^*$, whereas we believe that it is true only for the $x^*$'s that satisfy Condition 2 in Definition \ref{def:coherence}; our modified statement is given in Theorem~\ref{thm:SMD} (b).

\begin{theorem}[{Modified from \cite[Theorem 3.1]{Mertikopoulos2019optimistic}}]
\label{thm:SMD}

Suppose that MD \eqref{eq:SMD_algo} is run with a gradient oracle satisfying \eqref{eq:SMD_grad_cond}. 
\begin{enumerate}
\item[(a)] For strictly coherent problems, for any $\e>0$, if the step-size $\gamma_n$ satisfies $\sum_{n=1}^\infty \gamma_n=\infty$ and
\begin{equation}
\sum_{n=1}^\infty \gamma_n^2 \leq \min\left\{ \frac{\delta\e^2}{2\diamX^2\tau^2}, \frac{K\delta\e}{2\tau^2} \right\},
\label{eq:gamma_cond_MD}
\end{equation}
where $\diamX:=\sup_{x,y\in\Xcal} \|x-y\|$, $\delta\in(0,1)$, $K$ is defined in \eqref{eq:def_K}, and $\tau^2$ is defined in \eqref{eq:def_tau}, then
\begin{equation*}
\Pbb\Big( \exists n_0\in\mathbb{N},\exists x^*\in\Xcal^*, \text{ s.t. }  D(x^*, X_n)\leq \e,\forall n\geq n_0 \Big) \geq 1-\delta.
\end{equation*}

\item[(b)] For null-coherent problems, the sequence $\{\Ebb[D(p,X_n)]\}_n$ is non-decreasing for saddle points $p$ that satisfy Condition 2 (global MVI) in Definition~\ref{def:coherence}.
\end{enumerate}
\end{theorem}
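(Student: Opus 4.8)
The plan is to prove both parts via a one-step descent inequality for the Bregman divergence under MD. For a fixed saddle point $x^*$, the starting point is the standard three-point identity for the Bregman projection: since $X_{n+1}=P_{X_n}(-\gamma_n\ghat_n)$, the optimality condition for \eqref{eq:def_Proj} combined with the definition of $D$ yields a bound of the form
\begin{equation*}
D(x^*,X_{n+1}) \leq D(x^*,X_n) - \gamma_n\langle \ghat_n, X_n - x^* \rangle + \frac{\gamma_n^2}{2K}\|\ghat_n\|_*^2 + (\text{error terms}),
\end{equation*}
where the $1/(2K)$ factor comes from the $K$-strong convexity \eqref{eq:def_K}. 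First I would decompose the noisy inner product as $\langle \ghat_n, X_n-x^*\rangle = \langle g(X_n), X_n-x^*\rangle + \langle \ghat_n - g(X_n), X_n-x^*\rangle$, so that the first piece is controlled by the MVI/coherence structure and the second is a martingale-difference term whose conditional expectation vanishes by \eqref{eq:SMD_grad_cond}.

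For part (b), the argument is almost immediate once this inequality is in hand. I would take $p$ to be a saddle point satisfying Condition 2 of Definition~\ref{def:coherence}, i.e. the \emph{global} MVI $\langle g(x), x-p\rangle\geq 0$ for all $x\in\Xcal$. In the null-coherent case this holds with equality, so conditioning on $\Fcal_n$, taking expectations, and using $\Ebb[\ghat_n\mid\Fcal_n]=g(X_n)$ kills the linear term entirely (it equals zero, not merely nonpositive). What remains is $\Ebb[D(p,X_{n+1})\mid\Fcal_n]\geq D(p,X_n)$ plus a nonnegative second-order contribution, and taking total expectations gives the claimed monotone non-decrease of $\{\Ebb[D(p,X_n)]\}_n$. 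The key subtlety here — and the reason the theorem restricts to $p$ satisfying Condition 2 rather than an arbitrary $x^*\in\Xcal^*$ — is that the global MVI equality is exactly what makes the drift term vanish identically; for a saddle point satisfying only the \emph{local} Condition 3, the equality need not hold outside a neighborhood, so non-decrease cannot be concluded.

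For part (a) the real work lies in converting a ``per-step expected descent'' into a high-probability statement, and this is where I expect the main obstacle. Summing the one-step inequality and using strict coherence (Condition 2 with strict inequality off the solution set) gives a telescoping bound in which the accumulated positive drift $\sum_n \gamma_n\langle g(X_n), X_n-x^*\rangle$ is offset by the accumulated noise and second-order terms $\sum_n \gamma_n^2$, the latter being controlled precisely by the step-size condition \eqref{eq:gamma_cond_MD}. The delicate point is that the martingale term $M_n:=\sum_{k<n}\gamma_k\langle\ghat_k-g(X_k), X_k-x^*\rangle$ has summable variance — bounded by $\diamX^2\tau^2\sum\gamma_n^2$ — and by Doob's maximal inequality (or a Chebyshev/Kolmogorov-type bound) its supremum stays below $\e/2$ with probability at least $1-\delta$; this is exactly what the first branch of the minimum in \eqref{eq:gamma_cond_MD} is engineered to guarantee, while the second branch controls the $\sum\gamma_n^2\|\ghat_n\|_*^2$ term through the $K\delta\e/(2\tau^2)$ threshold. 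The hard part will be assembling these pieces so that, on the high-probability event where the noise stays small, the iterate is trapped within the $\e_0$-neighborhood where local coherence (Condition 3) can be invoked, and then showing that once inside, $D(x^*,X_n)\leq\e$ persists for all subsequent $n$ — this is the recurrence-to-trapping step that replaces the almost-sure convergence claimed in \cite{Mertikopoulos2019optimistic} with a high-probability guarantee, and I would emphasize that it is the failure to control the noise \emph{uniformly} rather than only in expectation that forces the weaker conclusion.
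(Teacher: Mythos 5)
Your part (a) strategy matches the paper's: the paper proves (a) only by pointing to steps \textbf{(ii)} and \textbf{(iii)} of the proof of Theorem \ref{thm:SOMD} --- recurrence of the iterates into every neighborhood $B(\Xcal^*,\e)$ of the solution set (shown by contradiction: strict coherence gives $\langle g(x),x-p\rangle\geq a>0$ on the compact set $\Xcal\setminus B(\Xcal^*,\e)$, so $\sum_n\gamma_n=\infty$ would drive $D(p,X_n)$ to $-\infty$ while the noise sums stay finite a.s.\ by martingale convergence), followed by a trapping induction on the event, of probability at least $1-\delta$, where the suprema of the martingale sum and of $\sum_k\gamma_k^2\|\ghat_k\|_*^2$ stay below $\e$. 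Your outline identifies exactly these ingredients, the correct roles of the two branches of \eqref{eq:gamma_cond_MD}, and the correct reason the conclusion is only high-probability. What you defer as ``the hard part'' is precisely what has to be written out --- the contradiction argument for recurrence and the induction in which $D(x^*,X_n)\leq c\e$ puts $X_n$ inside the region where Condition 3 of Definition \ref{def:coherence} applies, which in turn preserves the bound at step $n+1$ --- but your sketch is consistent with the paper's (itself only sketched) argument.

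For part (b) there is a directional slip. Your displayed one-step bound is an \emph{upper} bound on $D(x^*,X_{n+1})$, and no upper bound can yield the conclusion $\Ebb[D(p,X_{n+1})]\geq\Ebb[D(p,X_n)]$. The paper instead uses the exact three-point identity together with $\grad h(X_{n+1})-\grad h(X_n)=-\gamma_n\ghat_n$ (from the mirror-map form of the update), giving the equality $D(p,X_{n+1})=D(p,X_n)+D(X_n,X_{n+1})-\gamma_n\langle\ghat_n,X_n-p\rangle$; taking expectations, null-coherence kills the last term exactly via \eqref{eq:SMD_grad_cond}, and $D(X_n,X_{n+1})\geq 0$ gives the non-decrease. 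Your closing sentence (``plus a nonnegative second-order contribution'') shows you intend exactly this, but the inequality you start from does not deliver it; you must replace it with the identity. Your observation about why the statement is restricted to points $p$ satisfying the global MVI of Condition 2 is correct and is exactly the paper's correction to the original claim.
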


\begin{remark}
Theorem \ref{thm:SMD} (b) implies that for null-coherent problems with a unique saddle point $x^*$ (thus necessarily satisfies global (MVI) by definition of coherence), such as the two-player zero-sum game example provided in \cite[Proposition C.3]{Mertikopoulos2019optimistic}, the sequence $\{\Ebb[D(x^*,X_n)]\}_n$ is non-decreasing.
\end{remark}

\begin{theorem}[{Modified from \cite[Theorem 4.3]{Mertikopoulos2019optimistic}}]
\label{thm:SOMD}
Suppose that (SP) is strictly coherent and stochastic OMD \eqref{eq:SOMD_algo} is run with a gradient oracle satisfying \eqref{eq:grad_assumption}. For any $\e>0$, if the step-size $\gamma_n$ satisfies $\sum_{n=1}^{\infty}\gamma_n=\infty$ and 
\begin{equation}
\sum_{n=1}^\infty \gamma_n^2 \leq \min\left\{ \frac{\delta\e^2}{3 \diamX^2\sigma^2},  \frac{K\delta\e}{3 \sigma^2}  \right\},
\label{eq:gamma_cond}
\end{equation}
where $\delta\in(0,1)$, $K$ is defined in \eqref{eq:def_K}, and $\sigma^2$ is defined in \eqref{eq:def_sigma}, then 
\begin{equation}
\Pbb\Big(  \exists n_0\in\mathbb{N}, \exists x^*\in \Xcal^*, \text{ s.t. } D(x^*,X_n) \leq \e, \forall n\geq n_0  \Big) \geq 1- \delta.
\label{eq:SOMDresult}
\end{equation}
\end{theorem}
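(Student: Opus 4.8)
The plan is to anchor all probabilistic estimates at a single fixed global-(MVI) solution $p$ (which exists by Condition~2 of Definition~\ref{def:coherence}) while allowing the eventual target $x^*$ in \eqref{eq:SOMDresult} to be path-dependent, chosen only at the end. First I would establish the one-step energy inequality for the two Bregman projections of \eqref{eq:SOMD_algo}. Using $K$-strong convexity \eqref{eq:def_K} and the standard three-point property of $P_x(\cdot)$ (as derived in the appendices of \cite{Mertikopoulos2019optimistic}), for every $x\in\Xcal$ and every $n$,
\[ D(x,X_{n+1}) \le D(x,X_n) - \gamma_n\langle g(Y_n), Y_n-x\rangle + \gamma_n\langle g(Y_n)-\rhat_n, Y_n-x\rangle + \frac{\gamma_n^2}{2K}\|\rhat_n-\ghat_n\|_*^2 - \frac{K}{2}\|Y_n-X_n\|^2 - \frac{K}{2}\|X_{n+1}-Y_n\|^2. \]
The drift $\langle g(Y_n), Y_n-x\rangle$ is nonnegative for all $n$ when $x=p$ (global (MVI)), and nonnegative for any solution $x$ once $Y_n$ lies in the local-(MVI) neighborhood of radius $\e_0$ (Condition~3); the noise term has zero $\Fcal_{n,n}$-conditional mean since $\Ebb[\rhat_n\mid\Fcal_{n,n}]=g(Y_n)$.

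Next I would split the prediction error via $\rhat_n-\ghat_n=(\rhat_n-g(Y_n))+(g(Y_n)-g(X_n))+(g(X_n)-\ghat_n)$, bound the middle piece by $L_g\|Y_n-X_n\|$ (Lipschitzness of $g$), and absorb the resulting $\tfrac{3\gamma_n^2 L_g^2}{2K}\|Y_n-X_n\|^2$ into $-\tfrac{K}{2}\|Y_n-X_n\|^2$. Because \eqref{eq:gamma_cond} forces $\gamma_n\to0$, this absorption holds for all large $n$ and the finitely many exceptional terms contribute only a constant; this is precisely why, unlike Theorem~\ref{thm:OMD}, no separate $K/L_g$ step-size restriction appears. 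What survives is a nonnegative error $\psi_n:=\tfrac{3\gamma_n^2}{2K}\big(\|\rhat_n-g(Y_n)\|_*^2+\|\ghat_n-g(X_n)\|_*^2\big)$ with $\Ebb[\psi_n\mid\Fcal_{n,n}]\le 3\gamma_n^2\sigma^2/K$ by \eqref{eq:def_sigma}. The crucial device for the path-dependent target is to route the zero-mean noise through the $x$-\emph{free} vector martingale $W_n:=\sum_{k=1}^n\gamma_k\big(g(Y_k)-\rhat_k\big)$, since $\gamma_k\langle g(Y_k)-\rhat_k,\,Y_k-x\rangle$ equals an $x$-free scalar martingale increment minus $\langle \gamma_k(g(Y_k)-\rhat_k),x\rangle$; the accumulated noise is then bounded \emph{simultaneously for all} $x\in\Xcal$ by $\sup_n\|W_n\|_*\,\diamX$ plus the scalar part.

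I would then define the good event $\mathcal{E}$ on which $\sup_n\|W_n\|_*$, the scalar martingale, and $\sum_n\psi_n$ are each at most a fixed fraction of $\e$. Kolmogorov's/Doob's $L^2$ maximal inequality gives $\Pbb\big(\sup_n\|W_n\|_*\ge\lambda\big)\le \sigma^2\sum_n\gamma_n^2/\lambda^2$ and Markov's inequality gives $\Pbb\big(\sum_n\psi_n\ge\mu\big)\le 3\sigma^2\sum_n\gamma_n^2/(K\mu)$; feeding in \eqref{eq:gamma_cond} with $\lambda\propto\e/\diamX$ and $\mu\propto\e$ yields $\Pbb(\mathcal{E})\ge1-\delta$, and one checks that the two entries of the minimum in \eqref{eq:gamma_cond} are exactly what control the quadratic (variance $\propto\e^2$) and linear (mean $\propto\e$) estimates. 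On $\mathcal{E}$ the argument becomes deterministic: summing the recursion at $x=p$ bounds $\sum_n\gamma_n\langle g(Y_n),Y_n-p\rangle$ and $\sum_n\|X_{n+1}-Y_n\|^2$; together with $\sum_n\gamma_n=\infty$ and strict coherence (which, by continuity and compactness, gives $\inf\{\langle g(x),x-p\rangle:\mathrm{dist}(x,\Xcal^*)\ge\eta\}>0$) this forces $Y_n$, and hence $X_n$, into every neighborhood of $\Xcal^*$ infinitely often. Choosing $n_0$ and a solution $x^*$ with $D(x^*,X_{n_0})\le\e/2$ (WLOG $\e\le\e_0$), the trapping step invokes local (MVI) at $x^*$ to keep the drift nonpositive — valid because $\|Y_n-X_n\|\to0$ keeps $Y_n$ in the $\e_0$-neighborhood — so that $D(x^*,X_n)\le D(x^*,X_{n_0})+(\text{noise from }n_0\text{ to }n)\le\e$ for all $n\ge n_0$ by the uniform bounds on $\mathcal{E}$; a first-exit induction confirms the iterate never leaves the $\e$-ball, giving \eqref{eq:SOMDresult}.

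The main obstacle is exactly this probabilistic accounting, which the original proof left implicit. Since the limiting solution $x^*$ is selected by the trajectory, the noise must be controlled uniformly over all candidate targets \emph{and} over all random entry times $n_0$ at once; this is what forces the maximal-inequality bound on the $x$-free martingale $W_n$ in place of the almost-sure martingale-convergence argument of \cite{Mertikopoulos2019optimistic}, and it is precisely the reason the conclusion degrades from almost-sure to high-probability. A secondary delicate point is running recurrence (anchored at the global point $p$) and trapping (anchored at the nearby solution $x^*$) on the same good event with a budget split that reproduces the prefactor $3$ and the exact form of \eqref{eq:gamma_cond}.
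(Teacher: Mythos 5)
Your proposal is correct in its overall architecture and matches the paper's three-phase strategy: (1) an a.s./good-event argument showing $\sum_n\|Y_n-X_n\|^2<\infty$ anchored at the global-(MVI) point $p$; (2) recurrence of $Y_n$ near $\Xcal^*$ from $\sum_n\gamma_n=\infty$ plus the strict-coherence lower bound $\inf\{\langle g(x),x-p\rangle\}>0$ off a neighborhood of $\Xcal^*$; (3) trapping near a path-dependent $x^*$ via local (MVI), the $L_h$-Lipschitz comparison between $D(x^*,X_n)$ and $D(x^*,Y_n)$, and a $\delta/3$ budget split over three events controlled by Doob/Markov inequalities. Where you genuinely diverge is in the technical execution, and in two of the three places your route is arguably tighter than the paper's. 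First, the paper bounds $\|\rhat_n-\ghat_n\|_*^2\le 2\|\rhat_n\|_*^2+2\|\ghat_n\|_*^2$ and controls the resulting sums via the second-moment bound $G^2$ (creating a mismatch with the stated condition \eqref{eq:gamma_cond}, which is phrased in terms of $\sigma^2$); your decomposition through $g(Y_n)-g(X_n)$ keeps everything in terms of the noise variance $\sigma^2$ and is more faithful to \eqref{eq:gamma_cond} — but note that the theorem does \emph{not} assume $g$ is $L_g$-Lipschitz (that hypothesis appears only in Theorem~\ref{thm:OMD}), so you should either add that assumption or replace the absorption step by the cruder bound $\|g(Y_n)-g(X_n)\|_*\le 2\sup_{x\in\Xcal}\|g(x)\|_*$, which is summable against $\gamma_n^2$ and suffices. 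Second, your $x$-free vector martingale $W_n$ with a maximal inequality is a cleaner resolution of the real difficulty here — the target $x^*$ and the entry time $n_0$ are both trajectory-dependent — which the paper's proof papers over by writing the noise term $\xi_{k+1}^+$ (defined with anchor $p$) inside an inequality anchored at $x^*$ and by stating its events ``for any fixed $n_0$''. One caveat on your device: the identity $\Ebb[\|W_n\|_*^2]=\sum_k\gamma_k^2\Ebb[\|g(Y_k)-\rhat_k\|_*^2]$ uses orthogonality of martingale increments, which holds for Euclidean norms but not for a general dual norm $\|\cdot\|_*$; in finite dimensions this costs only a norm-equivalence constant, but then the clean prefactor $3$ and the exact constants in \eqref{eq:gamma_cond} would not be reproduced verbatim. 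Neither issue is a fatal gap, but both need to be stated explicitly for the argument to close.
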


\begin{remark}
The conditions on the step-size sequence $\{\gamma_n\}_n$ given in \eqref{eq:gamma_cond_MD} and \eqref{eq:gamma_cond} suggest that there is a trade-off between the evolution speed of the algorithm (how large $\gamma_n$ can be), the accuracy of the solution (how small $\e$ can be), and the probability of convergence (how small $\delta$ can be).
\end{remark}

\section{Conclusions and Future Work}
In an attempt towards understanding the recent body of work on MD/OMD dynamics for saddle point problems, in this article we have provided more rigorous and corrected statement of the claims in \cite{Mertikopoulos2019optimistic}. As part of future work we aim to shed light on the rates of convergence of MD and OMD under coherency assumptions. In this context we aim to build upon the analysis conducted in \cite{Mertikopoulos2018stochastic}.

\bibliographystyle{ieeetr}
\bibliography{cites}

\newpage
\section{Appendix}
\label{sec:appendix}

\subsection{Proof of Theorem \ref{thm:OMD}}
First, we restate \cite[Lemma D.1]{Mertikopoulos2019optimistic}.

\begin{lemma}
Suppose that (SP) is coherent and g is $L$-Lipschitz. For any saddle point $x^*\in \Xcal^*$, the iterates of (OMD) with exact gradient satisfy 
\begin{equation}
D(x^*,X_{n+1}) \leq D(x^*,X_n)  - \frac{1}{2}\left(K - \frac{\gamma_n^2 L^2}{K} \right) \|Y_n - X_n\|^2 - \gamma_n \langle g(Y_n), Y_n - x^*\rangle.
\label{eq:lemD1_1}
\end{equation}
If moreover, $p\in\Xcal^*$ is (one of) the special saddle points that satisfy (MVI) globally, then 
\begin{equation}
D(p,X_{n+1}) \leq D(p,X_n) - \frac{1}{2}\left(K - \frac{\gamma_n^2 L^2}{K} \right) \|Y_n - X_n\|^2.
\label{eq:lemD1_2}
\end{equation}
\label{lem:D1}
\end{lemma}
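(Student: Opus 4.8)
The plan is to prove Lemma~\ref{lem:D1} by establishing the fundamental descent inequality for a single OMD step and then specializing it using the coherence conditions. The starting point is the standard ``three-point'' property of the Bregman projection operator $P_x(y)$ defined in \eqref{eq:def_Proj}. Since $Y_n = P_{X_n}(-\gamma_n g(X_n))$ and $X_{n+1} = P_{X_n}(-\gamma_n g(Y_n))$ (using exact gradients $\ghat_n = g(X_n)$, $\rhat_n = g(Y_n)$), the first-order optimality condition for each projection yields an inequality of the form $\langle -\gamma_n g(X_n) - (\grad h(Y_n) - \grad h(X_n)), x' - Y_n\rangle \leq 0$ for all $x'\in\Xcal$, and analogously for $X_{n+1}$. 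First I would apply the well-known Bregman identity $D(x^*, X_{n+1}) - D(x^*, X_n) = D(X_{n+1},X_n) - \langle \grad h(X_{n+1}) - \grad h(X_n), x^* - X_{n+1}\rangle$ (or its projection-adapted variant), combining it with the optimality conditions to extract the inner-product term $-\gamma_n\langle g(Y_n), Y_n - x^*\rangle$ as the main ``progress'' contribution.

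Next I would carry out the estimate that produces the negative quadratic term $-\tfrac12(K - \gamma_n^2 L^2/K)\|Y_n - X_n\|^2$. This is where both hypotheses enter: the $K$-strong convexity of $h$ via \eqref{eq:def_K} gives a lower bound $D(X_{n+1},X_n) \geq \tfrac{K}{2}\|X_{n+1}-X_n\|^2$ and controls cross terms, while the $L$-Lipschitz continuity of $g$ bounds the gradient mismatch $\|g(Y_n) - g(X_n)\|_* \leq L\|Y_n - X_n\|$ that arises because the two projection steps use gradients evaluated at different points. The quadratic form appears after applying Young's (or Cauchy--Schwarz plus AM--GM) inequality to this mismatch term with the weight chosen to balance $K$ against $\gamma_n^2 L^2/K$; combining these bounds and discarding the nonnegative $\tfrac{K}{2}\|X_{n+1}-Y_n\|^2$-type leftover gives precisely \eqref{eq:lemD1_1}.

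Finally, to obtain the sharpened inequality \eqref{eq:lemD1_2} I would invoke the coherence structure. For a saddle point $p$ satisfying (MVI) globally (Condition~2 of Definition~\ref{def:coherence}), we have $\langle g(x), x - p\rangle \geq 0$ for all $x\in\Xcal$; applying this with $x = Y_n$ shows that the progress term satisfies $-\gamma_n\langle g(Y_n), Y_n - p\rangle \leq 0$, so it can simply be dropped from \eqref{eq:lemD1_1} (with $x^*$ replaced by $p$) to yield \eqref{eq:lemD1_2}. The main obstacle I anticipate is the careful bookkeeping in the first two steps: correctly juggling the \emph{two} distinct projections from the common anchor $X_n$, keeping track of which Bregman term is nonnegative and can be discarded versus which must be retained, and choosing the Young's-inequality constant so that the coefficient collapses exactly to $\tfrac12(K - \gamma_n^2 L^2/K)$ rather than some looser constant. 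The specialization in \eqref{eq:lemD1_2} is then essentially immediate once the general bound is in hand.
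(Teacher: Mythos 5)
Your proposal is correct and follows essentially the same route as the paper, which simply defers to the cited reference: the inequality \eqref{eq:lemD1_1} is exactly what one gets by applying the two-projection prox inequality (restated later as Lemma~\ref{lem:B22b}) with $x=X_n$, $y_1=-\gamma_n g(X_n)$, $y_2=-\gamma_n g(Y_n)$, bounding $\|g(Y_n)-g(X_n)\|_*\leq L\|Y_n-X_n\|$ by Lipschitz continuity, and \eqref{eq:lemD1_2} follows by dropping the term $-\gamma_n\langle g(Y_n),Y_n-p\rangle\leq 0$ via the global (MVI). The only difference is that you reconstruct the derivation of that prox inequality (three-point identity, strong convexity, Young's inequality) rather than quoting it, which is a harmless and correct elaboration.
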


\begin{proof}
Note that no additional proof is needed, since \eqref{eq:lemD1_1} is directly obtained from the first inequality in \cite[(D.2)]{Mertikopoulos2019optimistic} and \eqref{eq:lemD1_1} is the original statement of \cite[Lemma D.1]{Mertikopoulos2019optimistic}.
\end{proof}

Next, we add a result that is similar to \cite[Proposition B.4(a)]{Mertikopoulos2019optimistic}.
\begin{lemma}
Let $h$ be a $K$-strongly convex distance-generating function on $\Xcal$ and further assume that $\grad h$ is $L_h$-Lipschitz. Then, for any $y\in\mathcal{Y}$, we have
\begin{equation*}
\|P_{x_1}(y) - P_{x_2}(y)\| \leq \frac{L_h}{K}\|x_1 - x_2\|, \quad \forall x_1,x_2\in\text{dom } \partial h.
\end{equation*}
\label{lem:propB4ext}
\end{lemma}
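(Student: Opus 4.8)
The plan is to argue through the first-order (variational) optimality conditions of the two Bregman projections and then play the $K$-strong convexity of $h$ against the $L_h$-Lipschitz continuity of $\grad h$. Writing $z_i := P_{x_i}(y)$ for $i=1,2$, I would first unfold the definition \eqref{eq:def_Proj} and record that, since the minimand is convex in its argument, each $z_i$ is characterized by the variational inequality
\[
\langle \grad h(z_i) - \grad h(x_i) - y,\ x' - z_i\rangle \geq 0, \qquad \forall x' \in \Xcal .
\]
Obtaining this clean condition, in which the anchor point $x_i$ enters only through the linear term $\grad h(x_i)$, is the step demanding the most care: I would get it by differentiating the projection objective and invoking the optimality condition for minimizing a convex function over the convex set $\Xcal$.

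Next I would instantiate these two inequalities at the opposite projection's value: substitute $x' = z_2$ into the inequality for $z_1$, and $x' = z_1$ into the inequality for $z_2$. Adding the two resulting inequalities cancels the common dual vector $y$ and yields
\[
\langle \grad h(z_1) - \grad h(z_2),\ z_2 - z_1\rangle + \langle \grad h(x_2) - \grad h(x_1),\ z_2 - z_1\rangle \geq 0 ,
\]
which rearranges to
\[
\langle \grad h(x_2) - \grad h(x_1),\ z_2 - z_1\rangle \geq \langle \grad h(z_1) - \grad h(z_2),\ z_1 - z_2\rangle .
\]

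To finish, I would lower-bound the right-hand side using the $K$-strong convexity of $h$ in the monotone form \eqref{eq:def_K}, giving $\langle \grad h(z_1) - \grad h(z_2), z_1 - z_2\rangle \geq K\|z_1 - z_2\|^2$, and upper-bound the left-hand side by the Cauchy--Schwarz inequality in the primal/dual pairing together with the $L_h$-Lipschitz bound on $\grad h$, giving $\langle \grad h(x_2) - \grad h(x_1), z_2 - z_1\rangle \leq \|\grad h(x_1) - \grad h(x_2)\|_*\,\|z_1 - z_2\| \leq L_h\|x_1 - x_2\|\,\|z_1 - z_2\|$. Chaining these produces $K\|z_1 - z_2\|^2 \leq L_h\|x_1 - x_2\|\,\|z_1 - z_2\|$; dividing by $\|z_1 - z_2\|$ when it is nonzero (the case $z_1 = z_2$ being trivial) gives the claimed estimate $\|z_1 - z_2\| \leq (L_h/K)\|x_1 - x_2\|$. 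The only genuine obstacle is the first step, namely extracting the correct variational characterization so that the base point appears linearly through $\grad h$; once that is in place, the remainder is a routine two-point monotonicity argument.
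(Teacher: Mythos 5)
Your proposal is correct and follows essentially the same route as the paper's proof: the same two variational inequalities characterizing $z_1$ and $z_2$ (which the paper imports from \cite[Lemma B.1(b)(c)]{Mertikopoulos2019optimistic} rather than re-deriving), the same cross-substitution and addition, and the same conclusion by playing $K$-strong monotonicity of $\grad h$ against its $L_h$-Lipschitz bound via Cauchy--Schwarz.
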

\begin{proof}
Let $z_1=P_{x_1}(y)$ and $z_2=P_{x_2}(y)$. By \cite[Lemma B.1(b)(c)]{Mertikopoulos2019optimistic}, we have
\begin{align*}
&\langle \grad h(z_1) - y - \grad h(x_1), z_1 - p \rangle \leq 0, \quad \forall p\in \Xcal\\
&\langle \grad h(z_2) - y - \grad h(x_2), z_2 - p' \rangle \leq 0, \quad \forall p'\in \Xcal.
\end{align*}
Letting $p=z_2$ and $p'=z_1$ and adding the two inequalities:
\begin{equation*}
\langle \grad h(z_1) - \grad h(z_2), z_1 - z_2\rangle \leq \langle \grad h(x_1) - \grad h(x_2), z_1 - z_2 \rangle.
\end{equation*}
Note that the LHS of the above is low bounded by
\begin{equation*}
\langle \grad h(z_1) - \grad h(z_2), z_1 - z_2\rangle \geq K \|z_1 - z_2\|^2
\end{equation*}
by strong convexity of $h$. The RHS of the above is upper bounded by
\begin{equation*}
\langle \grad h(x_1) - \grad h(x_2), z_1 - z_2 \rangle \leq L_h\|x_1-x_2\|\|z_1 - z_2\|
\end{equation*}
by Cauchy-Schwarz and Lipschitz property of $\grad h$. Combining the two, we have
\begin{equation*}
\|z_1 - z_2\| \leq \frac{L_h}{K}\|x_1 - x_2\|.
\end{equation*}
\end{proof}

We are now ready to prove Theorem \ref{thm:OMD}.
\begin{proof}
Let $p$ be a saddle point of (SP) and satisfies (MVI) globally. Such $p$ exists by definition of coherence. By \eqref{eq:lemD1_2} in Lemma \ref{lem:D1}, we have
\begin{equation*}
D(p,X_{n+1}) \leq D(p,X_n) - \frac{1}{2}\left(K - \frac{\gamma_n^2 L_g^2}{K} \right) \|Y_n - X_n\|^2.
\end{equation*}
Since $\sup_n \gamma_n < K/L_g$, there exists $\alpha\in(0,1)$ such that $\gamma_n < \alpha K/L_g$ for all $n$. Then with this $\alpha$, we have
\begin{equation*}
D(p,X_{n+1}) \leq D(p,X_n) - \frac{K}{2}\left(1-\alpha^2 \right) \|Y_n - X_n\|^2.
\end{equation*}
Telescoping the above, we have
\begin{equation*}
D(p,X_{n+1}) \leq D(p,X_1) - \frac{K}{2}\left(1-\alpha^2 \right) \sum_{k=1}^{n} \|Y_k - X_k\|^2.
\end{equation*}
Rearranging the above, we have
\begin{equation*}
\frac{K}{2}\left(1-\alpha^2 \right) \sum_{k=1}^{n} \|Y_k - X_k\|^2 \leq D(p,X_1) - D(p,X_{n+1}) \leq D(p,X_1),
\end{equation*}
where the last inequality follows by positivity of Bregman divergence. Taking the limit as $n\to\infty$ on both sides of the above inequality, we have $\lim_{n\to\infty} \sum_{k=1}^{n} \|Y_k - X_k\|^2 < \infty$, which implies that $\lim_{n\to\infty} \|Y_n - X_n\|=0$.

Next by compactness of $\Xcal$, we have that $\{X_n\}_n$ has a convergent subsequence $\{X_{n_k}\}_k$ such that $\lim_{k\to\infty} X_{n_k} = \hat{x} \in \Xcal$. We show in the following that, in fact, $\hat{x} \in \Xcal^*$. First notice that 
\begin{equation*}
\lim_{k\to\infty} \|Y_{n_k} - \hat{x}\|  \leq  \lim_{k\to\infty} \left( \|Y_{n_k} - X_{n_k}\|  + \|X_{n_k} - \hat{x}\| \right)= 0.
\end{equation*}
Moreover, suppose that $\lim_n \gamma_n = \gamma$, and so $\lim_{k\to\infty} \gamma_{n_k} = \gamma$ as well. Then
\begin{equation*}
\hat{x} = \lim_{k\to\infty} Y_{n_k}  = \lim_{k\to\infty} P_{X_{n_k}}(-\gamma_{n_k} g(X_{n_k})) = P_{\hat{x}}(-\gamma g(\hat{x})),
\end{equation*}
where the last equality follow by
\begin{align*}
&\|P_{X_{n_k}}(-\gamma_{n_k} g(X_{n_k})) - P_{\hat{x}}(-\gamma g(\hat{x}))\| \\
&\qquad \leq \| P_{X_{n_k}}(-\gamma_{n_k} g(X_{n_k})) - P_{X_{n_k}}(-\gamma g(\hat{x})) \| + \| P_{X_{n_k}}(-\gamma g(\hat{x})) - P_{\hat{x}}(-\gamma g(\hat{x}))\|\\
& \qquad \overset{(a)}{\leq} \frac{1}{K}\|\gamma_{n_k}g(X_{n_k}) - \gamma g(\hat{x})\| + \frac{L_h}{K}\|X_{n_k} - \hat{x}\|\\
& \qquad \leq \frac{1}{K} \|\gamma_{n_k}g(X_{n_k}) - \gamma_{n_k}g(\hat{x})\| + \frac{1}{K} \|\gamma_{n_k}g(\hat{x})-\gamma g(\hat{x})\| + \frac{L_h}{K}\|X_{n_k} - \hat{x}\|\\
& \qquad \leq \frac{L_g}{K} |\gamma_{n_k}|\|X_{n_k} - \hat{x}\| + \frac{1}{K} |\gamma_{n_k}-\gamma | \|g(\hat{x})\| + \frac{L_h}{K}\|X_{n_k} - \hat{x}\|,
\end{align*}
since $\gamma_n<\infty$ for all $n$ and $\|g(\hat{x})\|<\infty$, taking the limit as $k\to\infty$ on both sides of the resulting inequality, we have that $\lim_{k\to\infty} \|P_{X_{n_k}}(-\gamma_{n_k} g(X_{n_k})) - P_{\hat{x}}(-\gamma g(\hat{x}))\| = 0$. In the above, step $(a)$ follows by \cite[Proposition B.4(a)]{Mertikopoulos2019optimistic} and Lemma \ref{lem:propB4ext}.
The above shows that $\hat{x}=P_{\hat{x}}(-\gamma g(\hat{x}))$. By \cite[(B.7)]{Mertikopoulos2019optimistic}, we have
\begin{equation*}
\langle  \grad h(\hat{x}), \hat{x} - x \rangle \leq \langle  \grad h(\hat{x}) - \gamma g(\hat{x}), \hat{x} - x  \rangle, \quad \forall x\in\Xcal.
\end{equation*}
That is, $\langle g(\hat{x}), x - \hat{x}\rangle\geq 0$ for all $x\in\Xcal$, which is (SVI). By definition of coherence, $\hat{x}$ must be a saddle point of (SP). 

So far, we have proved that $\lim_{k\to\infty} X_{n_k} = x^*\in\Xcal^*$. Next we want to show that $\lim_{n\to\infty} X_{n} = x^*\in\Xcal^*$. Similar to before, fix an $\alpha\in(0,1)$ such that $\gamma_n < \alpha K/L_g$. Then \eqref{eq:lemD1_1} in Lemma \ref{lem:D1} gives
\begin{equation*}
D(x^*,X_{n+1}) \leq D(x^*,X_n)  - \frac{K}{2}\left(1- \alpha^2\right) \|Y_n - X_n\|^2 - \gamma_n \langle g(Y_n), Y_n - x^*\rangle.
\end{equation*}
Telescoping the above, we have
\begin{align*}
D(x^*,X_{n+1}) & \leq D(x^*,X_{n_0})  - \frac{K}{2}\left(1- \alpha^2\right) \sum_{k=n_0}^n\|Y_k - X_k\|^2 - \alpha K/L_g \sum_{k=n_0}^n \langle g(Y_k), Y_k - x^*\rangle\\
& \leq D(x^*,X_{n_0}) - \alpha K/L_g \sum_{k=n_0}^n \langle g(Y_k), Y_k - x^*\rangle,
\end{align*}
where the choice of $n_0$ is as follows: let $\e := \bar{\e}/(2+L_h\diamX)$ with $\bar{\e}\in(0,\e_0)$ being arbitrary but fixed,
\begin{enumerate}
\item Choose $N_1$ sufficiently large such that $\|X_n - Y_n\| \leq \e$ for all $n\geq N_1$, such choice is possible since we have proved that $\|X_n - Y_n\| \to 0$ as $n\to\infty$;
\item Choose $N_2$ sufficiently large such that $D(X_n, Y_n) \leq \e$ for all $n\geq N_2$, such choice is possible since by $\|X_n - Y_n\| \to 0$ and the Bregman reciprocity condition, we have $D(X_n,Y_n)\to 0$ as $n\to \infty$;
\item Choose $n_0\geq \max\{N_1,N_2\}$ such that $D(x^*,X_{n_0})\leq \e$, such choice is possible since we have proved that $\lim_{k\to\infty} D(x^*, X_{n_k})=0$.
\end{enumerate}
With such choice of $n_0$, we will prove that $D(x^*, X_{n+1})\leq \e$ for all $n\geq n_0$.

First we show that for all $n \geq n_0$, 
\begin{equation}
D(x^*, X_n)\leq \e \Longrightarrow D(x^*, Y_n)\leq (2+L_h\diamX)\e = \bar{\e}. 
\label{eq:D_XnYn}
\end{equation}
To see this, in \cite[Lemma B.2]{Mertikopoulos2019optimistic}, letting $p=x^*, x'=Y_n, x=X_n$, we have
\begin{equation*}
\begin{split}
D(x^*, Y_n) &= D(x^*, X_n) + D(X_n, Y_n) + \langle \grad h(Y_n) - \grad h(X_n), X_n - x^* \rangle\\
&\leq D(x^*, X_n) + D(X_n, Y_n) + L_h \|Y_n - X_n\| \|X_n - x^*\|\\
&\leq \e + \e + L_h \diamX \e,
\end{split}
\end{equation*}
where the first inequality follows by Cauchy-Schwarz and the Lipschitz property of $\grad h$, and the second inequality follows by our choice of $n_0$.

Now starting with $n = n_0$, we have
\begin{equation*}
D(x^*, X_{n_0+1}) \leq D(x^*,X_{n_0}) - \alpha K/L_g \langle g(Y_{n_0}), Y_{n_0} - x^*\rangle.
\end{equation*}
Since $D(x^*,X_{n_0}) \leq \e$, by \eqref{eq:D_XnYn} we have $D(x^*, Y_{n_0})\leq \bar{\e}$. By our modified Condition 3 in the definition of coherence, we have $\langle g(Y_{n_0}), Y_{n_0} - x^*\rangle \geq 0$, which implies 
\begin{equation*}
D(x^*, X_{n_0+1}) \leq D(x^*,X_{n_0})  - \alpha K/L_g \langle g(Y_{n_0}), Y_{n_0} - x^*\rangle \leq D(x^*,X_{n_0}) \leq \e.
\end{equation*}
Using \eqref{eq:D_XnYn} again, the above implies $D(x^*, Y_{n_0+1})\leq \bar{\e}$ and hence $\langle g(Y_{n_0+1}), Y_{n_0+1} - x^*\rangle \geq 0$. Therefore,
\begin{equation*}
D(x^*, X_{n_0+2}) \leq D(x^*,X_{n_0})  - \alpha K/L_g \sum_{k=n_0}^{n_0+1}\langle g(Y_{k}), Y_{k} - x^*\rangle \leq D(x^*,X_{n_0}) \leq \e.
\end{equation*}
Keeping this procedure, we can show that for all $n\geq n_0$, we have $D(x^*, X_n)\leq \e$. Since $\e$ can be chosen to be arbitrarily close to zero (by choosing $\bar{\e}$ arbitrarily close to zero), we have proved that for all $\e>0$, there exists an $n_0(\e)$ such that $D(x^*, X_n)\leq \e$ for all $n\geq n_0(\e)$, hence $\lim_{n\to\infty} D(x^*, X_n)=0$. By the Bregman reciprocity condition, we have $\lim_{n\to\infty} X_n \to x^*$.
\end{proof}

\subsection{Proof of Theorem \ref{thm:SMD}}
\begin{proof}

\begin{enumerate}

\item[(a)] The same technique used for proving \textbf{(ii)} and \textbf{(iii)} in Theorem \ref{thm:SOMD} can also be used to prove the convergence of mirror descent (MD) algorithm; we omit the details here.

\item[(b)] Note that there is a typo in the proof of \cite[Theorem 3.1(b)]{Mertikopoulos2019optimistic} that can be quite confusing: in \cite[(C.14)]{Mertikopoulos2019optimistic}, the plus sign before the last innerproduct should be a minus sign. To see how \cite[(C.14)]{Mertikopoulos2019optimistic} (with the corrected sign) is obtained, we first recall that $h$ is proper, convex, and closed on $\Xcal$. Therefore, we have $\grad h^{-1} = \grad h^*$, where $h^*(x^*):=\sup_{x\in\Xcal} \langle x^*,x\rangle  - h(x)$ is the convex conjugate of $h$. By \cite[(B.5),(B.6b)]{Mertikopoulos2019optimistic}, MD \eqref{eq:SMD_algo} can be written as
\begin{equation*}
X_{n+1} = \grad h^*(\grad h(X_n) - \gamma_n \ghat_n).
\end{equation*}
It follows that 
\begin{equation*}
\grad h (X_{n+1}) = \grad h \left( \grad h^*(\grad h(X_n) - \gamma_n \ghat_n)\right) = \grad h(X_n) - \gamma_n \ghat_n,
\end{equation*}
hence
\begin{equation}
\grad h(X_{n+1}) - \grad h(X_n) = -\gamma_n \ghat_n.
\label{eq:gradh_gradg}
\end{equation}
Now applying \cite[Lemma B.2]{Mertikopoulos2019optimistic} with $p=p$ (a saddle points that satisfies Condition 2 of Definition \ref{def:coherence}), $x'=X_{n+1}$, and $x=X_n$, we have
\begin{equation*}
\begin{split}
D(p,X_{n+1}) &= D(p, X_{n}) + D(X_{n}, X_{n+1}) + \langle  \grad h(X_{n+1}) - \grad h(X_n),  X_{n} - p\rangle\\
&=D(p, X_{n}) + D(X_{n}, X_{n+1}) - \gamma_n \langle  \ghat_n,  X_{n} - p\rangle,
\end{split}
\end{equation*}
where the last equality follows by \eqref{eq:gradh_gradg}. Taking expectation on both sides,
\begin{equation*}
\begin{split}
\Ebb[D(p,X_{n+1})] &\overset{(a)}{=} \Ebb[D(p, X_{n}) ] + \Ebb[D(X_{n}, X_{n+1})] - \gamma_n \Ebb[ \langle g(X_n), X_{n} - p  \rangle]\\
&\overset{(b)}{=}\Ebb[D(p, X_{n}) ] + \Ebb[D(X_{n}, X_{n+1})] \\
&\geq \Ebb[D(p, X_{n}) ],
\end{split}
\end{equation*}
where step $(a)$ follows by $\Ebb[ \langle  \ghat_n,  X_{n} - p\rangle] = \Ebb [   \langle  \Ebb [ \ghat_n | \Fcal_n] ,  X_{n} - p\rangle  ] = \Ebb [   \langle  g(X_n) ,  X_{n} - p\rangle  ]$, since $X_{n}$ is $\Fcal_n$-measurable and $\ghat_n$ is an unbiased conditioned on $\Fcal_n$ by \eqref{eq:SMD_grad_cond}, and step $(b)$ follows by definition of null-coherence. This shows that the sequence $\{\Ebb[D(p,X_n)]\}_n$ is non-decreasing for the special saddle points that satisfy global (MVI).
\end{enumerate}
\end{proof}

\subsection{Proof of Theorem \ref{thm:SOMD}}

\begin{proof}

We first include a result from \cite[Proposition B.4(b)]{Mertikopoulos2019optimistic}, which will be used frequently in the proof of the theorem.
\begin{lemma}[{\cite[Proposition B.4(b)]{Mertikopoulos2019optimistic}}]
Let $K$ be defined in \eqref{eq:def_K} and let the prox-mapping $P_x$ be defined in \eqref{eq:def_Proj}. Fix some $x'\in\Xcal$, $x\in\text{dom}\grad h$. Letting $x_1^+=P_x(y_1)$ and $x_2^+=P_x(y_2)$, we have
\begin{equation*}
    D(x',x_2^+) \leq D(x',x) + \langle y_2, x_1^+ - x'\rangle + \frac{1}{2K}\|y_1 - y_2\|_{*}^2 - \frac{K}{2}\|x_1^+ - x\|^2.
\end{equation*}
\label{lem:B22b}
\end{lemma}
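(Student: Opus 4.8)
The plan is to establish the bound by first deriving a ``basic descent inequality'' for the single prox step $x_2^+ = P_x(y_2)$, and then trading $x_2^+$ for $x_1^+$ inside the linear term $\langle y_2, \cdot - x'\rangle$ while absorbing the resulting discrepancy into the two remaining terms $\frac{1}{2K}\|y_1-y_2\|_{*}^2$ and $-\frac{K}{2}\|x_1^+ - x\|^2$. Three ingredients are used repeatedly: the variational characterization of the prox-mapping (\cite[Lemma B.1]{Mertikopoulos2019optimistic}), namely that $z = P_x(w)$ satisfies $\langle \grad h(z) - \grad h(x) - w, z - u\rangle \le 0$ for every $u \in \Xcal$; the three-point identity for the Bregman divergence (\cite[Lemma B.2]{Mertikopoulos2019optimistic}, already invoked in the proof of Theorem~\ref{thm:OMD}); and the two estimates coming from $K$-strong convexity \eqref{eq:def_K}, i.e. $D(a,b) \ge \frac{K}{2}\|a-b\|^2$, together with the Fenchel--Young inequality $\langle u,v\rangle \le \frac{1}{2K}\|u\|_{*}^2 + \frac{K}{2}\|v\|^2$.

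First I would derive the basic descent inequality
\begin{equation*}
D(x', x_2^+) \le D(x', x) - D(x_2^+, x) + \langle y_2, x_2^+ - x'\rangle.
\end{equation*}
To obtain it, apply the three-point identity to rewrite $D(x', x_2^+)$ in terms of $D(x', x)$, $D(x, x_2^+)$, and an inner product involving $\grad h(x_2^+) - \grad h(x)$; splitting the direction $x - x'$ as $(x - x_2^+) + (x_2^+ - x')$ lets me recombine $D(x, x_2^+)$ with the first piece into $-D(x_2^+,x)$, while the prox variational inequality for $x_2^+$ with test point $u = x'$ bounds the second piece by $\langle y_2, x_2^+ - x'\rangle$.

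Next I would write $\langle y_2, x_2^+ - x'\rangle = \langle y_2, x_1^+ - x'\rangle + \langle y_2, x_2^+ - x_1^+\rangle$, so that it suffices to show
\begin{equation*}
\langle y_2, x_2^+ - x_1^+\rangle - D(x_2^+, x) \le \frac{1}{2K}\|y_1 - y_2\|_{*}^2 - \frac{K}{2}\|x_1^+ - x\|^2.
\end{equation*}
This is where the two prox points interact. I would use the variational inequality for $x_1^+ = P_x(y_1)$ with test point $u = x_2^+$ to replace $y_2$ by $y_1$, at the cost of the term $\langle y_2 - y_1, x_2^+ - x_1^+\rangle$ and a residual gradient term $\langle \grad h(x_1^+) - \grad h(x), x_2^+ - x_1^+\rangle$; then I would expand $D(x_2^+, x)$ by the three-point identity centered at $x_1^+$, namely $D(x_2^+, x) = D(x_2^+, x_1^+) + D(x_1^+, x) + \langle \grad h(x) - \grad h(x_1^+), x_1^+ - x_2^+\rangle$. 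The two gradient terms cancel exactly, leaving $\langle y_2 - y_1, x_2^+ - x_1^+\rangle - D(x_2^+, x_1^+) - D(x_1^+, x)$. Finally, Fenchel--Young applied to $\langle y_2 - y_1, x_2^+ - x_1^+\rangle$ produces $\frac{1}{2K}\|y_1 - y_2\|_{*}^2 + \frac{K}{2}\|x_2^+ - x_1^+\|^2$, the positive quadratic is killed by $D(x_2^+,x_1^+) \ge \frac{K}{2}\|x_2^+ - x_1^+\|^2$, and $D(x_1^+, x) \ge \frac{K}{2}\|x_1^+ - x\|^2$ supplies the last term. Substituting this bound back into the previous display yields the claim.

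The main obstacle is the bookkeeping in this last step: the cancellation of the two gradient cross-terms and the matching of the $\frac{K}{2}\|x_2^+ - x_1^+\|^2$ contributions require the second application of the three-point identity (now centered at $x_1^+$ rather than $x$) to be aligned in sign with the variational inequality for $x_1^+$, and the Fenchel--Young split to be calibrated with exactly the strong-convexity constant $K$. Everything else --- the basic descent inequality and the two convexity estimates --- is routine once the Bregman-divergence conventions (consistent with \cite[Lemma B.2]{Mertikopoulos2019optimistic}) are handled carefully.
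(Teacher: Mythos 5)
Your proof is correct: the basic descent inequality, the exchange of $x_2^+$ for $x_1^+$ via the variational inequality for $x_1^+$ tested at $u=x_2^+$, the exact cancellation of the two $\langle \grad h(x_1^+)-\grad h(x),\,x_2^+-x_1^+\rangle$ cross-terms against the three-point expansion of $D(x_2^+,x)$ centered at $x_1^+$, and the final Fenchel--Young/strong-convexity step all check out and reproduce the stated bound with the exact constants. The paper itself does not prove this lemma---it only imports \cite[Proposition B.4(b)]{Mertikopoulos2019optimistic}---so there is no in-paper argument to diverge from; your derivation is the standard mirror-prox template inequality and matches the proof of the cited proposition. You were also right to calibrate the Bregman conventions against \cite[Lemma B.2]{Mertikopoulos2019optimistic} rather than the displayed definition of $D$ in the introduction, whose argument order is inconsistent with how $D(\cdot,\cdot)$ and $P_x$ are actually used in all of the paper's proofs.
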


We are now ready to prove the theorem. The proof contains three steps: 
\begin{enumerate}
\item[\textbf{(i)}] Show that 
\begin{equation}
\Pbb\left(   \lim_{n\to\infty} \|X_n - Y_n\| = 0  \right) = 1.
\label{eq:XnYn_converge}
\end{equation}

\item[\textbf{(ii)}] Let $\{Y_{n_k}\}_{k}$ denote a subsequence of $\{Y_n\}_n$. Show that 
\begin{equation}
\Pbb\left(\exists \{Y_{n_k}\}_{k}, \text{ s.t. } \lim_{k\to\infty}\inf_{x^*\in\Xcal^*} \|Y_{n_k} - x^*\| = 0 \right) = 1.
\label{eq:Ynk_converge}
\end{equation}

\item[\textbf{(iii)}] Show that \eqref{eq:SOMDresult} holds.
\end{enumerate} 
Let $\{X_{n_k}\}_{k}$ denote a subsequence of $\{X_n\}_n$ and we notice that \textbf{(i)} and $\textbf{(ii)}$ imply that
\begin{equation}
\Pbb\left(\exists \{X_{n_k}\}_{k}, \text{ s.t. } \lim_{k\to\infty}\inf_{x^*\in\Xcal^*} \| X_{n_k} - x^*\| = 0 \right) = 1.
\label{eq:Xnk_converge}
\end{equation}
To see this, denote the events considered in \eqref{eq:XnYn_converge}, \eqref{eq:Ynk_converge}, and \eqref{eq:Xnk_converge} by $E$, $E_Y$, and $E_X$, respectively. For any $\omega \in E_Y \cap E$, 
\begin{align*}
\inf_{x^*\in\Xcal^*} \|X_{n_k}(\omega) - x^*\| &\leq \inf_{x^*\in\Xcal^*} \|X_{n_k}(\omega) Y_{n_k}(\omega) + Y_{n_k}(\omega)- x^*\|\\
&\leq \|X_{n_k}(\omega) - Y_{n_k}(\omega)\|  + \inf_{x^*\in\Xcal^*} \|Y_{n_k}(\omega) - x^*\|.
\end{align*}
Letting $k\to\infty$ on both sides of the resulting inequality, we have $\lim_{k\to\infty} \inf_{x^*\in\Xcal^*} \| X_{n_k}(\omega) - x^*\|= 0$, thus $\omega\in E_X$. To recap, we have shown $\omega\in E_Y \cap E$ implies $\omega\in E_X$, which implies $E_Y \cap E \subset E_X$, and so $\Pbb(E_X)\geq \Pbb(E_Y \cap E)=1$; \eqref{eq:Xnk_converge} is proved. We will see later that \eqref{eq:Xnk_converge} will be used to prove \textbf{(iii)}. 

We now show \textbf{(i)}. 
Let $p$ be one of the special saddle points that satisfy global (MVI). Define
\begin{equation*}
\begin{split}
U_{n+1}^+&:= \rhat_n - g(Y_n),\quad \text{and}\quad \xi_{n+1}^+:= -\langle U_{n+1}^+, Y_n - p \rangle.\\
U_{n+1}&:= \ghat_n - g(X_n),\quad \text{and}\quad \xi_{n+1}:= -\langle U_{n+1}, X_n - p \rangle.
\end{split}
\end{equation*}

Let $x'=p$ (a saddle point that satisfies Condition 2 in Definition \ref{def:coherence}), $x=X_n$, $y_1=-\gamma_n \ghat_n$, $x_1^+=Y_n$, $y_2=-\gamma_n \rhat_n$, and $x_2^+=X_{n+1}$ in Lemma~\ref{lem:B22b},
then
\begin{align*}
D(p,X_{n+1}) &\leq D(p,X_n) - \gamma_n\langle \rhat_n, Y_n - p\rangle + \frac{\gamma_n^2}{2K}\|\rhat_n - \ghat_n\|_*^2 - \frac{K}{2} \|Y_n - X_n\|^2\\
&\leq D(p,X_n) - \gamma_n\langle g(Y_n), Y_n - p\rangle - \gamma_n\langle U_{n+1}^+, Y_n - p\rangle\\
&\qquad +\frac{\gamma_n^2}{K}\|\rhat_n\|_*^2 + \frac{\gamma_n^2}{K} \|\ghat_n\|_*^2 - \frac{K}{2} \|Y_n - X_n\|^2\\
&\leq D(p,X_n) + \gamma_n\xi_{n+1}^+ +\frac{\gamma_n^2}{K}\|\rhat_n\|_*^2 + \frac{\gamma_n^2}{K} \|\ghat_n\|_*^2 - \frac{K}{2} \|Y_n - X_n\|^2,
\end{align*}
where the last inequality follows by (MVI).
Telescoping the above, we have 
\begin{align}
\frac{K}{2} \sum_{k=1}^{n} \|Y_k - X_k\|^2 &\leq D(p,X_1) - D(p,X_{n+1}) + \sum_{k=1}^n \gamma_k \xi_{k+1}^+ + \frac{1}{K} \sum_{k=1}^n \gamma_k^2 \|\ghat_k\|_*^2 + \frac{1}{K} \sum_{k=1}^n \gamma_k^2 \|\rhat_k\|_*^2\nonumber\\
&\leq D(p,X_1)  + \sum_{k=1}^n \gamma_k \xi_{k+1}^+ + \frac{1}{K} \sum_{k=1}^n \gamma_k^2 \|\ghat_k\|_*^2 + \frac{1}{K} \sum_{k=1}^n \gamma_k^2 \|\rhat_k\|_*^2.
\label{eq:three_terms}
\end{align}
We will show that each of the last three terms on the RHS converges to some random variable almost surely as $n\to\infty$ and that random variable is finite almost surely. Then we can conclude 
\begin{equation*}
\sum_{k=1}^{\infty}  \|Y_k - X_k\|^2  < \infty \quad \text{a.s.,}
\end{equation*}
which implies that
\begin{equation}
\lim_{k\to\infty} \|Y_k - X_k\| = 0 \quad \text{a.s.}.
\label{eq:stoOMD_XY_converge}
\end{equation}
First consider the second term in \eqref{eq:three_terms}. Define $M_n:=\sum_{k=1}^{n-1} \gamma_k \xi_{k+1}^+$ and notice that for $m>n$,
\begin{align*}
\Ebb[M_{m} | \Fcal_{n,n}] = \sum_{k=1}^{n-1} \gamma_{k}\Ebb[\xi_{k+1}^+ | \Fcal_{n,n}]  + \sum_{k=n}^{m-1} \gamma_{k}\Ebb[\xi_{k+1}^+ | \Fcal_{n,n}] = M_n + 0,
\end{align*}
where the last equality follows by the fact that $\xi_k^+$ is $\Fcal_{n,n}$ measurable for $k\leq n$ and $\Ebb[\xi_k^+ | \Fcal_{n,n}] = \Ebb[\Ebb[\xi_k^+|\Fcal_{k-1,k-1}] | \Fcal_{n,n}] =0$ for $k\geq n+1$. Hence, $\{M_n,\Fcal_{n,n}\}_n$ is a martingale. Note also that
\begin{align*}
\Ebb [M_n^2] &= \sum_{k=1}^{n-1} \gamma_k^2 \Ebb[(\xi_{k+1}^+)^2] + \sum_{k=1}^{n-1} \sum_{s\neq k}\gamma_k^2 \gamma_s^2\Ebb[\xi_{k+1}^+ \xi_{s+1}^+] \overset{(a)}{=} \sum_{k=1}^{n-1} \gamma_k^2 \Ebb[(\xi_{k+1}^+)^2]\\
&\leq \sum_{k=1}^{n-1} \gamma_k^2 \Ebb[\|U_{k+1}^+\|_*^2 \|Y_k - p\|^2]\leq \diamX^2 \sum_{k=1}^n \gamma_k \Ebb[\Ebb[\|U_{k+1}^+\|_*^2 | \Fcal_{k,k}]]\\ &\leq \diamX^2\sigma^2 \sum_{k=1}^n \gamma_k^2,
\end{align*}
where step $(a)$ follows by $\Ebb[\xi_{k+1}^+ \xi_{s+1}^+] = \Ebb[\Ebb[\xi_{k+1}^+ | \Fcal_{k,k}] \xi_{s+1}^+]=0$ for $s<k$. (If $s>k$, then first condition on $\Fcal_{s,s}$.)
It follows that $\sup_n \Ebb[M_n^2] \leq \sup_n \diamX^2\sigma^2 \sum_{k=1}^n \gamma_k^2 < \infty$. Then we also have $\sup_n \Ebb[|M_n|]<\infty$, since $\Ebb[|M_n|] \leq (\Ebb[M_n^2])^{1/2}$ by H{\" o}lder's inequality. Then by the martingale convergence theorem, we have that $\lim_{n\to\infty}M_n=M$ exists almost surely and $\Pbb(M<\infty) = 1$. 
Next consider the third term in \eqref{eq:three_terms}. Define $S_n:=\sum_{k=1}^{n-1} \gamma_k^2 \|\ghat_k\|_*^2$ and for $m>n$,
\begin{equation}
\Ebb[S_m |\Fcal_{n,n-1}] = \sum_{k=1}^{n-1} \gamma_k^2 \Ebb[ \| \ghat_{k}\|_*^2 | \Fcal_{n,n-1} ] +  \sum_{k=n}^{m-1} \gamma_k^2 \Ebb[ \|\ghat_{k}\|_*^2 | \Fcal_{n,n-1} ] \leq S_n + G^2 \sum_{k=n}^{m-1} \gamma_k^2,
\label{eq:Sm}
\end{equation}
where the last inequality follows by $\ghat_k$ being $\Fcal_{n,n-1}$-measurable for $k\leq n-1$ and the assumption in \eqref{eq:grad_assumption}.
Now let $R_n:=S_n + G^2 \sum_{k=n}^\infty \gamma_k^2$ and notice that
\begin{equation*}
\Ebb[R_m | \Fcal_{n,n-1}] = \Ebb[S_m | \Fcal_{n,n-1}] + G^2\sum_{k=m}^{\infty} \gamma_k^2 \leq S_n + G^2 \sum_{k=n}^{m-1} \gamma_k^2 + G^2\sum_{k=m}^{\infty} \gamma_k^2 = S_n + G^2 \sum_{k=n}^{\infty} = R_n,
\end{equation*}
which implies that $\{R_n, \Fcal_{n,n-1}\}_{n}$ is a super-martingale. Note that
\begin{equation*}
\Ebb[|R_n|] \leq  \sum_{k=1}^{n-1} \gamma_k^2 \Ebb[\Ebb[\|\ghat_k\|_*^2 | \Fcal_{k,k-1}]] + G^2 \sum_{k=n}^\infty \gamma_k^2 \leq G^2 \sum_{k=1}^\infty \gamma_k^2 < \infty.
\end{equation*}
Hence, by the super-martingale convergence theorem, we have that $\lim_{n\to\infty} R_n =R$ exists almost surely and $\Pbb(R<\infty)=1$. Similarly, we can show that the fourth term in \eqref{eq:three_terms} also converges to a random variable almost surely and that random variable is finite with probability 1. It follows that the RHS of \eqref{eq:three_terms} is finite almost surely when $n\to\infty$, which confirms our assertion stated in \eqref{eq:stoOMD_XY_converge}.

Next we show \textbf{(ii)}. 
Note that by definition of limit, the event being measured in \eqref{eq:Ynk_converge} is equivalent to
\begin{equation*}
    \Big\{\exists \{Y_{n_k}\}_k, \text{ s.t. } \forall \e>0, \exists k_0, \text{ s.t. } \inf_{x^*\in \Xcal^*} \|Y_{n_k} - x^*\|\leq \e, \forall k\geq k_0 \Big\}.
\end{equation*}
Therefore, to show \eqref{eq:Ynk_converge} it is equivalent to show that for any $\e>0$, $\{Y_n\}_n$ enters the $\e$-neighborhood of $\Xcal^*$, which is defined as $B(\Xcal^*,\e):=\{x\in\Xcal:\inf_{x^*\in\Xcal^*} \|x-x^*\|<\e\}$, infinitely often.
Assume by contradiction that $\{Y_n\}_n$ only enters $B(\Xcal^*,\e)$ finitely often. Then there exists an $n_0$ such that $\inf_{x^*\in\Xcal^*}\|Y_n - x^*\|\geq \e$ for all $n\geq n_0$. 
Note that $\Xcal\setminus B(\Xcal^*,\e)$ is compact and $g$ is continuous. Moreover, by definition of strict coherence, the point $p$ defined in Condition 2 of Definition \ref{def:coherence} satisfies $\langle g(x),x-p\rangle = 0$ only if $x\in\Xcal^*$. This implies that there exists some $a>0$ such that 
\begin{equation*}
\langle g(x), x - p \rangle \geq a, \quad \forall x \in \Xcal\setminus B(\Xcal^*,\e).
\end{equation*}
This implies that, 
\begin{equation}
\langle g(Y_n), Y_n - p \rangle \geq a, \quad \forall n \geq n_0.
\label{eq:compare_a}
\end{equation}
Using Lemma~\ref{lem:B22b} with $x'=p$ (a saddle point that satisfies Condition 2 of Definition \ref{def:coherence}), $x=X_n$, $y_1=-\gamma_n\ghat_n$, $y_2=-\gamma_n\rhat_n$, $x_1^+=Y_n$, and $x_2^+=X_{n+1}$, we have
\begin{align}
D(p, X_{n+1}) &\leq D(p, X_n) - \gamma_n \langle \rhat_n, Y_n - p \rangle + \frac{\gamma_n^2}{2K} \|\rhat_n - \ghat_n\|_*^2\nonumber\\
&= D(p, X_n) - \gamma_n \langle g(Y_n), Y_n - p \rangle - \gamma_n \langle U_{n+1}^+, Y_n - p \rangle + \frac{\gamma_n^2}{2K} \|\rhat_n - \ghat_n\|_*^2\nonumber\\
&= D(p, X_n) - \gamma_n \langle g(Y_n), Y_n - p \rangle + \gamma_n\xi_{n+1}^+ + \frac{\gamma_n^2}{2K} \|\rhat_n - \ghat_n\|_*^2,
\label{eq:basic}
\end{align}
Telescoping \eqref{eq:basic} and using \eqref{eq:compare_a}, we have
\begin{align}
D(p,X_{n+1}) &\leq D(p, X_{n_0}) - a \sum_{k=n_0}^n \gamma_k  + \sum_{k=n_0}^n \gamma_k \xi_{k+1}^+ + \sum_{k=n_0}^n \frac{\gamma_k^2}{2K} \|\rhat_k - \ghat_k\|_*^2\nonumber\\
&\leq D(p, X_{n_0}) - a \sum_{k=n_0}^n \gamma_k  + \sum_{k=n_0}^n \gamma_k \xi_{k+1}^+ + \frac{1}{K}\sum_{k=n_0}^n \gamma_k^2 \|\rhat_k\|_*^2  + \frac{1}{K}\sum_{k=n_0}^n \gamma_k^2\| \ghat_k\|_*^2.
\label{eq:contradiction}
\end{align}
Note that the last three terms in \eqref{eq:contradiction} are similar to the last three term in \eqref{eq:three_terms} (only the lower limit of the summations is different), and so we can use the same method to show that they stay finite with probability 1 as $n\to\infty$. Note also that the first term in \eqref{eq:contradiction} is finite and does not change with $n$, moreover, the second term in \eqref{eq:contradiction}  goes to $-\infty$ as $n\to\infty$. Hence, \eqref{eq:contradiction} implies that $\lim_{n\to\infty} D(p,X_{n+1})\leq -\infty$, which contradicts with the fact that Bregman divergence is non-negative. Therefore, our initial assumption that $Y_n$ enters $B(\Xcal^*,\e)$ only finitely often is incorrect, and so \eqref{eq:Ynk_converge} is proved.

Finally we show \textbf{(iii)}. Recall that we have obtained the result in \eqref{eq:Xnk_converge}. By the Bregman reciprocity condition, we have $\lim_{k\to\infty}\inf_{x^*\in\Xcal^*} D(x^*, X_{n_k})=0$ with probability 1 as well.
Now replacing $p$ with any $x^*\in\Xcal^*$ in \eqref{eq:basic}, 
\begin{align}
D(x^*, X_{n+1}) &\leq  D(x^*, X_n) - \gamma_n \langle g(Y_n), Y_n - x^* \rangle + \gamma_n\xi_{n+1}^+ + \frac{\gamma_n^2}{2K} \|\rhat_n - \ghat_n\|_*^2\nonumber\\
& \leq D(x^*, X_n) - \gamma_n \langle g(Y_n), Y_n - x^* \rangle + \gamma_n\xi_{n+1}^+ + \frac{\gamma_n^2}{K} \|\rhat_n\|_*^2 + \frac{\gamma_n^2}{K}  \|\ghat_n\|_*^2,\label{eq:basic2}
\end{align}
Following similar procedure as in the proof of  \cite[Theorem 5.2]{Zhou2017stochasticMD}, we have that for any $\delta\in(0,1)$ and $\e>0$, if the step-size sequence $\{\gamma_n\}_n$ satisfies \eqref{eq:gamma_cond}, then for any fixed $n_0\in\mathbb{N}$,
\begin{equation*}
\begin{split}
\Pbb\left(  \sup_{n\geq n_0} \sum_{k=n_0}^n \gamma_k\xi_{k+1}^+ \leq \e \right) &\geq 1 - \frac{\delta}{3}\\
\Pbb\left(  \sup_{n\geq n_0} \frac{1}{K}\sum_{k=n_0}^n \gamma_k^2 \|\rhat_k\|_*^2 \leq \e \right) &\geq 1 - \frac{\delta}{3}\\
\Pbb\left(  \sup_{n\geq n_0} \frac{1}{K}\sum_{k=n_0}^n \gamma_k^2 \|\ghat_k\|_*^2 \leq \e \right) &\geq 1 - \frac{\delta}{3}.
\end{split}
\end{equation*}
Let the three sets measured above be denoted by $A_1$, $A_2$, and $A_3$, respectively, and let $B_{\delta}:=\cap_{i=1}^3 A_i$. Then
\begin{equation*}
\Pbb\left( B_{\delta} \right) = 1 - \Pbb\left(  \left( \cap_{i=1}^3 A_i \right)^c \right) = 1 - \Pbb\left(  \cup_{i=1}^3 A_i^c  \right)\geq 1- \sum_{i=1}^3 \Pbb(A_i^c) \geq 1 - \delta.
\end{equation*}
Let $\e = \bar{\e}/(2+L_h \diamX)$ for some arbitrary but fixed $\bar{\e}\in(0,\e_0/4)$. Recall that we have defined a set $E$ to be the event considered in \eqref{eq:XnYn_converge}.
Now for an arbitrary but fixed $\omega\in E \cap B_\delta$, choose $n_0$ following the three steps listed above \eqref{eq:D_XnYn}, with Step 3 being replaced by $\inf_{x'\in\Xcal^*} D(x',X_{n_0}(\omega))\leq \e$. Let $x^*\in\Xcal^*$ be the point that achieves the infimum. Then we have $D(x^*, X_{n_0}(\omega))\leq \e$ and $D(x^*, Y_{n_0}(\omega))\leq \bar{\e}$. 
Telescoping \eqref{eq:basic2}, we have 
\begin{align}
D(x^*, X_{n+1}(\omega)) &\leq D(x^*, X_{n_0}(\omega)) - \sum_{k=n_0}^n \gamma_k \langle g(Y_k(\omega)), Y_k(\omega) - x^* \rangle\nonumber\\
& \qquad + \sum_{k=n_0}^n \gamma_k\xi_{k+1}^+(\omega) + \frac{1}{K} \sum_{k=n_0}^n \gamma_k^2 \|\rhat_k(\omega)\|_*^2 + \frac{1}{K}\sum_{k=n_0}^n \gamma_k^2  \|\ghat_k(\omega)\|_*^2\nonumber\\
&\leq 4\e  -  \sum_{k=n_0}^n \gamma_k \langle g(Y_k(\omega)), Y_k(\omega)-x^*\rangle.
\label{eq:for_induction}
\end{align}
From \eqref{eq:for_induction}, we prove by induction that for all $n\geq n_0$, $D(x^*, X_{n+1}(\omega))\leq 4\e$. First, for $n=n_0$, we have
\begin{equation*}
D(x^*, X_{n+1}(\omega)) \leq 4\e  -  \gamma_{n_0} \langle g(Y_{n_0}(\omega)), Y_{n_0}(\omega)-x^*\rangle.
\end{equation*}
As we have explained above that $D(x^*, Y_{n_0}(\omega))\leq \bar{\e}<\e_0$, then by the local (MVI) assumption, we have $\langle g(Y_{n_0}(\omega)), Y_{n_0}(\omega)-x^*\rangle\geq 0$, and so $D(x^*, X_{n_0+1}(\omega)) \leq 4\e $. Now assume that $D(x^*, X_{n}(\omega))\leq 4\e$ for $n=n_0,n_0+1,\ldots,N$ for some $N>n_0$, which by \eqref{eq:D_XnYn} implies $D(x^*, Y_{n}(\omega))\leq 4 \bar{\e} < \e_0$ for $n=n_0,n_0+1,\ldots,N$ and thus $ \langle g(Y_n(\omega)), Y_n(\omega)-x^*\rangle\geq 0$ for $n=n_0,n_0+1,\ldots,N$, which implies $D(x^*, X_{N+1}(\omega)) \leq 4\e$; we have completed the inductive proof. 

To recap, we have proved that for any fixed $\delta\in(0,1)$ and $\e>0$, choose $\{\gamma_n\}_n$ that satisfies \eqref{eq:gamma_cond}, then for all $\omega \in E\cap B_{\delta}$, there exists an $n_0(\e,\delta,\omega)<\infty$, such that $D(x^*, X_n(\omega))\leq 4\e$ for all $n\geq n_0(\e,\delta,\omega)$. Then the theorem statement follows by noticing that $\Pbb(E \cap B_{\delta}) \geq 1- \delta$.
\end{proof}

\end{document}